\documentclass[11pt,a4paper]{amsart}[2000]
\textwidth  5.5 truein

\title{Permutations of strongly self-absorbing $C^{*}$-algebras}
\author{Ilan Hirshberg}
\address{Department of Mathematics, Ben Gurion University of the Negev, P.O.B. 653, Be'er \indent Sheva 84105, Israel}
\email{ilan@math.bgu.ac.il}

\author{Wilhelm Winter}
\address{Mathematisches Institut der Universit\"at M\"unster\\
Einsteinstr. 62\\ 48149 M\"unster,
\indent Germany}

\email{wwinter@math.uni-muenster.de}

\usepackage{amssymb}
\usepackage{amsthm}
\usepackage{xypic}

\theoremstyle{plain}

\newtheorem{Thm}{Theorem}[section]
\newtheorem{Cor}[Thm]{Corollary}
\newtheorem{Lemma}[Thm]{Lemma}

\theoremstyle{definition}

\newtheorem{Rmk}[Thm]{Remark}

\newcommand{\B}{\mathcal{B}}
\newcommand{\A}{\mathcal{A}}
\newcommand{\M}{\mathcal{M}}

\newcommand{\D}{\mathcal{D}}
\newcommand{\Ch}{\mathcal{C}}

\newcommand{\Zh}{\mathcal{Z}}

\newcommand{\E}{\mathcal{E}}

\newcommand{\R}{{\mathbb R}}
\newcommand{\N}{{\mathbb N}}
\newcommand{\Z}{{\mathbb Z}}
\newcommand{\C}{{\mathbb C}}

\newcommand{\be}{{\mathbf{1}}}

\newcommand{\lb}{\left <}
\newcommand{\rb}{\right >}

\renewcommand{\arrow}{\rightarrow}

\newcommand{\aut}{\mathrm{Aut}}

\newcommand{\id}{\mathrm{id}}

\newcommand{\OO}{\mathcal{O}}

\newcommand{\MAcent}{\M(\A)_{\infty}\cap \A'}

\newcommand{\En}{\E^{\otimes n}}
\newcommand{\EnS}{\left ( \En \right )^{S_n}}



\thanks{Research partially supported by the Center for Advanced Studies in Mathematics at Ben Gurion
University. The second named author was partially supported by the
DFG (SFB 478).}

\begin{document}
\begin{abstract}
Let $G$ be a finite group acting on $\{1,...,n\}$. For any
$C^*$-algebra $\A$, this defines an action of $\alpha$ of
$G$ on $\A^{\otimes n}$. We show that if $\A$ tensorially absorbs
a UHF algebra of infinite type, the Jiang-Su algebra, or is
approximately divisible, then $\A \times_{\alpha} G$ has the
corresponding property as well.
\end{abstract}
\maketitle

\section{Introduction}
Recall (see \cite{toms-winter}) that a separable, unital infinite
dimensional $C^*$-algebra $\D$ is said to be \emph{strongly
self-absorbing} if the embedding $\D \to \D \otimes \D$ given by
$d \mapsto d \otimes 1$ is approximately unitarily equivalent to
an isomorphism (regardless of which tensor product one a-priori
uses, any such $\D$ must be nuclear, and thus there is no
ambiguity in the definition). The list of known examples of such
algebras is quite short. It consists of UHF algebras of `infinite
type' (i.e., ones for which all the primes that appear in the
supernatural number do so with infinite multiplicity), the
Jiang-Su algebra $\Zh$ (\cite{jiang-su}), the Cuntz algebras
$\OO_2$ and $\OO_{\infty}$, and tensor products of $\OO_{\infty}$
by UHF algebras of infinite type.

A $C^*$-algebra $\A$ is said to be \emph{$\D$-absorbing} for a
given strongly self-absorbing $C^*$-algebra $\D$ if $\A \cong \A
\otimes \D$. This concept plays an important role in structure
theory of $C^*$-algebras, particularly in relation to the Elliott
program.

Various permanence properties of strongly self-absorbing
$C^*$-algebras were studied in \cite{toms-winter},
\cite{hirshberg-winter:rokhlin} and
\cite{hirshberg-rordam-winter}. While the property of
$\D$-absorption does remain permanent under many constructions, it
does not pass in general to crossed products; one can find
counterexamples for $\D = \OO_2$ as well as for UHF algebras.
However, it was shown in \cite{hirshberg-winter:rokhlin} that
$\D$-absorption does pass to crossed products by $\Z$, $\R$ or by
compact groups provided the group action has a Rokhlin property
(see \cite{izumi, izumi II, izumi survey}; for the non-compact
cases, one needs to assume that $\D$ is $K_1$-injective). We do not
know whether there is an example in which $\Zh$-absorption does
not pass to crossed products.

The Rokhlin property for finite groups is very restrictive, and it
seems desirable to try to look at other examples of group actions.
In this note, we consider the following kinds of actions. Suppose
$G$ is a finite group acting on $\{1,...,n\}$. Let $\A$ be a
separable $C^*$-algebra.
 We have
an induced action $\alpha:G \to \textrm{Aut}(\A^{\otimes n})$
given by $$\alpha_g(a_1 \otimes a_2 \otimes \cdots \otimes a_n) =
a_{g(1)}\otimes a_{g(2)}\otimes\cdots \otimes a_{g(n)}$$ (any
tensor product can be used here). We note that in many cases of
interest, such an action will fail to have the Rokhlin property;
for instance, if $\A = \Zh$, then $\A^{\otimes n} \cong \Zh$ is
projectionless, and thus no action on it could have the Rokhlin
property (in other cases, in which projections do exist, the
action typically will not have the Rokhlin property, but might
have the weaker \emph{tracial Rokhlin property}, see
\cite{phillips:tracial rokhlin}).

For any $C^{*}$-algebra $\A$, we denote
$$\A_{\infty} = \ell^{\infty}(\N,\A)/\Ch_0(\N,\A) \, .$$
$\A$ may be embedded  into   $\ell^{\infty}(\N,\A)$ and into
$\A_{\infty}$ in a canonical way (as constant sequences).

If $\alpha:G \arrow \aut(\A)$ is an action of a finite group $G$
on a $C^{*}$-algebra $\A$, then we have naturally induced actions
of $G$ on $\M(\A)$, and therefore also on
$\ell^{\infty}(\N,\M(\A))$, $\M(\A)_{\infty}$ and $\MAcent$,
respectively -- those actions will all be denoted by
$\bar{\alpha}$.

The main result here is the following.

\begin{Thm}
\label{main theorem}
Let $\D$ be one of the known strongly self-absorbing $C^{*}$-algebras.
Let $\alpha$ be an action of the finite group $G$
on the separable  $C^{*}$-algebra $\A$, and
suppose $G$ acts on the set $\{1,..,n\}$. Suppose furthermore there exists a unital
homomorphism $\psi:\D^{\otimes n} \to \MAcent$  which is $G$-equivariant
with respect to the permutation action on $\D^{\otimes n}$ and the
induced action $\bar{\alpha}$ on $\M(\A)_{\infty}$.
Then, $\A \times_{\alpha} G$ is $\D$-absorbing.

Replacing $\D$ by a finite direct sum of matrix algebras other than $\C$ in
the preceding hypotheses, it follows that $\A \times_{\alpha} G$ is approximately divisible.

In particular, if $\A$ is $\D$-stable, or $\A$ is approximately divisible,
then so is $\A^{\otimes n} \times G$, where $G$ acts on $\A^{\otimes n}$ by permutation.
\end{Thm}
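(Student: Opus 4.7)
The strategy is to invoke the Toms--Winter characterization of $\D$-absorption: for a separable $C^*$-algebra $\B$, the existence of a unital $*$-homomorphism $\D \to \M(\B)_\infty \cap \B'$ is equivalent to $\B \cong \B \otimes \D$. Thus the plan is to produce such a map with $\B = \A \times_\alpha G$, using the hypothesized $\psi$.

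The key observation is that the diagonal $\Delta\colon \D \to \D^{\otimes n}$, $d \mapsto d \otimes \cdots \otimes d$, is a unital $*$-homomorphism whose image is pointwise fixed by the permutation action of $G$. Consequently, the composition $\psi \circ \Delta\colon \D \to \MAcent$ lands in the fixed-point subalgebra $(\MAcent)^{\bar\alpha}$, by equivariance of $\psi$.

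Now comes the passage to the crossed product: any $x \in \MAcent$ with $\bar\alpha_g(x) = x$ for every $g$, viewed inside $\M(\A \times_\alpha G)_\infty$, still commutes with $\A$ and additionally commutes with the canonical implementing unitaries $u_g$ (since $u_g x u_g^* = \bar\alpha_g(x) = x$); hence it lies in $\MAcentxG$. Combining with the previous paragraph yields a unital $*$-homomorphism $\D \to \MAcentxG$, and the Toms--Winter result delivers the conclusion. The approximate-divisibility assertion follows from the same diagonal construction, applied to the analogous characterization of approximate divisibility in terms of unital homomorphisms from finite direct sums of matrix algebras into the central sequence algebra.

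For the ``in particular'' statement, assume $\A$ is $\D$-stable; applying $\D$-stability repeatedly yields a $G$-equivariant isomorphism $\A^{\otimes n} \cong \A^{\otimes n} \otimes (\D^{\otimes n})^{\otimes \infty}$, where the permutation action on the right permutes the $n$ factors within each $\D^{\otimes n}$-block. A standard reindexation ($\eps$-test) argument then extracts a unital $G$-equivariant $*$-homomorphism $\D^{\otimes n} \to \M(\A^{\otimes n})_\infty \cap (\A^{\otimes n})'$, at which point the main theorem applies to the permutation action on $\A^{\otimes n}$; the approximately divisible case is handled in the same way, using the matrix-algebra analogue of $\D$. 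The main technical point will be this equivariant extraction; the diagonal embedding and the fixed-point inclusion into the crossed product central sequence algebra are essentially formal.
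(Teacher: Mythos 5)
The proposal founders on its central step: the ``diagonal'' map $\Delta\colon \D \to \D^{\otimes n}$, $d \mapsto d \otimes d \otimes \cdots \otimes d$, is not a $*$-homomorphism --- it is not even linear, since $(d_1+d_2)^{\otimes n} \neq d_1^{\otimes n} + d_2^{\otimes n}$. There is no canonical unital $*$-homomorphism from $\D$ into the $S_n$-fixed points of $\D^{\otimes n}$, and producing one is precisely the hard content of the paper. Indeed, the authors explicitly remark that their arguments use the concrete structure of each known strongly self-absorbing algebra and that the existence of such an embedding for an \emph{arbitrary} strongly self-absorbing $\D$ is open; a working diagonal map would settle that open problem trivially. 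A concrete obstruction showing the issue is real: $\bigl( M_2^{\otimes 2} \bigr)^{S_2} \cong M_3 \oplus M_1$ admits no unital embedding of $M_2$, which is why Lemma \ref{matrix fixed point} needs the divisibility hypothesis $p^k \mid m$, $p^k \nmid n!$, and why the approximately divisible case requires $p,q > n$.

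What the paper actually does, and what your proposal is missing, is a case-by-case construction of a unital embedding $\D \hookrightarrow \bigl( \D^{\otimes n}\bigr)^{S_n}$: for UHF algebras of infinite type this uses the character theory of $S_n$ (the multiplicity of every irreducible in the permutation representation on $(\C^m)^{\otimes n}$ is divisible by $p$ under the stated divisibility hypotheses); for $\Zh$ it realizes $\bigl(\E^{\otimes n}\bigr)^{S_n}$ as a $C(\Delta)$-algebra over a finite-dimensional simplex with $\Zh$-absorbing fibers and invokes Theorem \ref{HRW thm}; for $M_p \oplus M_q$ it is an explicit combinatorial construction. Your surrounding framework --- reducing to the fixed-point subalgebra of $\MAcent$, passing from there to $\MAcentxG$ via the covariance relation, and the reindexation argument for the ``in particular'' clause --- matches the paper's Remark \ref{r-equivariant-embedding} and Lemma \ref{invariant embedding implies absorption} and is essentially correct, but it is the formal shell around the theorem; the substance is the fixed-point embedding, and the proposed route to it does not exist.
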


To prove the theorem, we shall show that for $\D$ a UHF algebra of infinite type, the
Jiang-Su algebra, or an algebra of the form $M_p\oplus M_q$ for
$p,q > n$, there exists a unital embedding of $\D$ into the fixed
point subalgebra of $\D^{\otimes n}$ under the action of the
symmetric group  $S_n$.

For $\D = \OO_2$ or $\OO_{\infty}$, this is already known (see
\cite[Theorems 4.2 and Section~5]{izumi} and \cite[Lemma~10]{KishimotoKumjian} -- for $\OO_2$, one can show that the action of $S_n$
has the Rokhlin property, and for $\OO_{\infty}$, this follows
from the much more general fact that the fixed point subalgebra of
$\OO_{\infty}$ under any finite group action is a finite direct
sum of simple purely infinite $C^*$-algebras).

We wish to point out that the arguments we have use the actual structure of
the specific strongly self-absorbing $C^*$-algebras.
It remains an open problem to find a proof that
works for any  strongly self-absorbing $C^*$-algebra.

The first author would like to thank A.~Besser, Y.~Glasner and
N.~Gurevich for some helpful pointers.

\section{Preliminaries}
If $\alpha:G \to \aut(\A)$ is an action, we shall denote by
$\A^{\alpha}$, or by $\A^G$, the fixed point subalgebra (we'll use
the latter when it is clear what the action is).

We have the following characterization of $\D$-absorption (based
on ideas of Elliott and of Kirchberg), which appears as Theorem
7.2.2 in \cite{rordam book}. We note that the statement in
\cite{rordam book} refers to the relative commutant of $\A$ in an
ultrapower of $\M(\A)$; however, it is easy to see that the
characterization still holds as stated below.

\begin{Thm}
\label{D-stable characterization} Let $\D$ be  a strongly
self-absorbing and $\A$ be any separable $C^{*}$-algebra. Then,  $\A$ is
$\D$-absorbing if and only if $\D$ admits a unital homomorphism to
$\MAcent$.
\end{Thm}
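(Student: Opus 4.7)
My plan is to prove both directions, leveraging the ultrafilter-version statement from Rørdam's book (Theorem 7.2.2) as a black box and supplying only the bridge between the sequence algebra $\M(\A)_\infty$ and the ultrapower $\M(\A)^\omega$.

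For the ($\Rightarrow$) direction, suppose $\A\cong\A\otimes\D$. Since $\D$ is strongly self-absorbing, the inductive system $\D\to\D^{\otimes 2}\to\D^{\otimes 3}\to\cdots$ with connecting maps $x\mapsto x\otimes 1$ has each map approximately unitarily equivalent to an isomorphism, so the limit is $\D^{\otimes\infty}\cong\D$; hence $\A\cong\A\otimes\D^{\otimes\infty}$. I fix such an isomorphism $\Phi$ and extend it to multiplier algebras. For each $k\in\N$ the unital inclusion of $\D$ into the $k$-th tensor factor of $\D^{\otimes\infty}$, transported back through $\Phi^{-1}$, gives a unital $*$-homomorphism $\varphi_k:\D\to\M(\A)$. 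On the dense set of elements $a\in\A$ whose $\Phi$-image lies in $\A\otimes\D^{\otimes m}\otimes 1\otimes 1\otimes\cdots$ for some finite $m$, the commutator $[\varphi_k(d),a]$ vanishes once $k>m$; uniform boundedness of the $\varphi_k$'s then gives $\|[\varphi_k(d),a]\|\to 0$ for every $a\in\A$ and $d\in\D$. The sequence $(\varphi_k)$ therefore descends to a unital $*$-homomorphism $\D\to\M(\A)_\infty$ taking values in $\MAcent$.

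For the ($\Leftarrow$) direction, fix a free ultrafilter $\omega$ on $\N$. Because $\Ch_0(\N,\M(\A))$ is contained in the kernel of $\lim_\omega \|\cdot\|$, the canonical map $\ell^\infty(\N,\M(\A))\twoheadrightarrow\M(\A)^\omega$ factors through a $*$-homomorphism $\pi:\M(\A)_\infty\to\M(\A)^\omega$; clearly $\pi$ carries relative commutants into relative commutants, so $\pi(\MAcent)\subset\M(\A)^\omega\cap\A'$. Composing a hypothesized unital $*$-homomorphism $\D\to\MAcent$ with $\pi$ therefore produces a unital $*$-homomorphism $\D\to\M(\A)^\omega\cap\A'$, and Rørdam's theorem immediately yields $\D$-absorption of $\A$.

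The real work of the theorem lives in the Elliott-style approximate intertwining argument underlying Rørdam's Theorem 7.2.2; by invoking it directly, we reduce this result to the trivial observation that a unital $*$-homomorphism into the sequence-algebra central sequence algebra descends to one into the ultrapower central sequence algebra. The forward direction is essentially bookkeeping with tensor factors, and the only small subtlety there is justifying the density/boundedness argument that upgrades pointwise commutation on a nice dense subset to norm commutation on all of $\A$.
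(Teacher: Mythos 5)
Your proposal is correct and follows the same route as the paper, which simply invokes R{\o}rdam's Theorem 7.2.2 and remarks that the passage from the ultrapower $\M(\A)^\omega\cap\A'$ to the sequence algebra $\MAcent$ ``is easy to see''; you have merely written out that omitted bridge (the quotient map $\M(\A)_\infty\to\M(\A)^\omega$ for one direction, and the tail embeddings of $\D$ into $\A\otimes\D^{\otimes\infty}$ together with the standard density-plus-uniform-boundedness argument for the other). Both of your supplied arguments are sound.
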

Note that since any strongly self-absorbing $C^*$-algebra has to
be simple, it follows that unless $\A = 0$, the $*$-homomorphism
above must be an embedding (of course, $0$ is $\D$-absorbing for
any $\D$).

We note that in the above theorem, if instead of strongly
self-absorbing we take $\D = M_p \oplus M_q$ for some $p,q>1$,
then $\D$-absorption should be replaced by approximate
divisibility. It is furthermore easy to show that if $\MAcent$
admits  a unital homomorphism from $M_p$, then it also admits a
unital homomorphism from the UHF algebra $M_{p^{\infty}}$.

 A simple modification of Lemma 2.3 from
\cite{hirshberg-winter:rokhlin} gives us the following.

\begin{Lemma}
\label{invariant embedding implies absorption} Let $\A$, $\D$ be unital
separable $C^{*}$-algebras. Let $G$ be a finite group and
$\alpha: G \arrow \aut(\A)$ be an action. Suppose $\D$ admits a
unital homomorphism into $ (\A_{\infty} \cap A')^{\bar{\alpha}}$.
Then, $\D$ admits a unital homomorphism into $(\M(\A
\times_{\alpha}G))_{\infty} \cap (\A \times_{\alpha}G)'$.

In particular, it follows that if $\D = M_p$ above, then $\A
\times_{\alpha} G$ absorbs $M_{p^{\infty}}$. If $\D = M_p \oplus
M_q$ for some $p,q>1$, then $\A \times_{\alpha} G$ is
approximately divisible.
\end{Lemma}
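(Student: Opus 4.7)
The plan is to take the given unital homomorphism $\varphi: \D \to (\A_\infty \cap \A')^{\bar\alpha}$, compose it with the canonical inclusion $\A_\infty \hookrightarrow \M(\A \times_\alpha G)_\infty$ arising from $\A \subseteq \A \times_\alpha G$, and verify that the resulting map lands in $\MAcentxG$. Since $\A$ is unital and $G$ is finite, $1_\A$ is the unit of both $\A \times_\alpha G$ and $\M(\A \times_\alpha G)$, so the composed homomorphism is still unital.

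Now $\A \times_\alpha G$ is generated as a $C^*$-algebra by $\A$ together with the canonical unitaries $\{u_g\}_{g \in G}$, so it is enough to check that $\varphi(\D)$ commutes with $\A$ (immediate from $\varphi(\D) \subseteq \A'$) and with each $u_g$. For the latter, fix $d \in \D$ and a bounded lift $(a_k)_{k \in \N} \in \ell^\infty(\N,\A)$ of $\varphi(d)$. Since the action $\bar\alpha$ on $\A_\infty$ is induced componentwise, the invariance $\bar\alpha_g(\varphi(d)) = \varphi(d)$ is the statement that $\|\alpha_g(a_k) - a_k\| \to 0$. The covariance relation $u_g a u_g^* = \alpha_g(a)$ for $a \in \A$ then shows that the commutator $u_g \varphi(d) u_g^* - \varphi(d)$ in $\M(\A \times_\alpha G)_\infty$ is represented by the null sequence $(\alpha_g(a_k) - a_k)_k$, and hence vanishes.

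This produces the desired unital homomorphism $\D \to \MAcentxG$. The ``in particular'' clauses then follow from Theorem \ref{D-stable characterization} together with the remark immediately after it: a unital embedding of $M_p$ into $\MAcentxG$ promotes to one of $M_{p^\infty}$ and hence yields $M_{p^\infty}$-absorption of $\A \times_\alpha G$, while the analogous characterization of approximate divisibility handles $\D = M_p \oplus M_q$ with $p,q > 1$. The only substantive point is the translation of $\bar\alpha$-invariance into commutation with the $u_g$, which is essentially tautological once one unwinds how $\bar\alpha$ is defined on $\A_\infty$; there is no real obstacle, consistent with the lemma being advertised as a simple modification of the corresponding result in \cite{hirshberg-winter:rokhlin}.
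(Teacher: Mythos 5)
Your proof is correct and follows essentially the route the paper intends (the paper itself only cites Lemma 2.3 of \cite{hirshberg-winter:rokhlin} rather than writing the argument out): compose with the unital inclusion $\A_{\infty}\hookrightarrow(\A\times_{\alpha}G)_{\infty}=\M(\A\times_{\alpha}G)_{\infty}$, get commutation with $\A$ from $\varphi(\D)\subseteq\A'$, and get commutation with the canonical unitaries $u_{g}$ by combining the covariance relation $u_{g}a u_{g}^{*}=\alpha_{g}(a)$ with the componentwise description of $\bar{\alpha}$-invariance of a lift. The deduction of the ``in particular'' clauses from Theorem \ref{D-stable characterization} and the remark following it is likewise as intended.
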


\begin{Rmk}
\label{r-equivariant-embedding}
In the situation of Theorem \ref{main theorem}, note that,
since $\psi$ is $G$-equivariant,
the $S_n$-invariant subalgebra of the copy of $\D^{\otimes n}$ in
$\MAcent$ is clearly contained in the fixed point subalgebra of
$\MAcent$ under the action of $G$. It will thus suffice for us to
show that there is a unital embedding of $\D$ into $\left (
\D^{\otimes n} \right )^{S_n}$

Henceforth, therefore, we assume that $G=S_n$, and that $\alpha$
is given by permutation of the tensor factors.
\end{Rmk}

We shall also require a result from \cite{hirshberg-rordam-winter}
concerning $C(X)$-algebras. Recall that, for a compact Hausdorff
space $X$, a $C(X)$-algebra is a $C^*$-algebra $\A$, along with a
fixed unital homomorphism from $C(X)$ to the center of $\M(\A)$.
The \emph{fiber} over $x \in X$, denoted $A_x$, is the quotient
$\A/C_0(X \setminus \{x\})\A$.

\begin{Thm}[\cite{hirshberg-rordam-winter}, Theorem 4.6]
\label{HRW thm}
Let $\D$ be a $K_{1}$-injective strongly
self-absorbing $C^{*}$-algebra. Let $X$ be a compact metrizable
space of finite covering dimension. Let $\A$ be a separable
$C(X)$-algebra. It follows that $\A$ is $\D$-absorbing if and only
if $\A_{x}$ is $\D$-absorbing for each $x \in X$.
\end{Thm}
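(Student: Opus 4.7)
The forward direction is straightforward: since $\D$ is strongly self-absorbing, it is nuclear, so tensoring with $\D$ is exact, and $\A \cong \A \otimes \D$ implies, upon quotienting by $C_0(X \setminus \{x\})\A$, that $\A_x \cong \A_x \otimes \D$ for each $x \in X$.

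For the converse, by Theorem \ref{D-stable characterization} it suffices to produce a unital $*$-homomorphism $\D \to \MAcent$. The fibrewise hypothesis gives, for each $x \in X$, a unital $*$-homomorphism $\phi_x : \D \to \M(\A_x)_\infty \cap \A_x'$, and the problem is to patch these local data into one global embedding, the patching being controlled by the finite covering dimension of $X$. A convenient first reduction is to approximate $X$ by an inverse limit of finite simplicial complexes of dimension at most $\dim X$ (a standard fact in dimension theory for compact metrizable spaces) and to write $\A$ accordingly as an inductive limit of $C(X_n)$-algebras; since $\D$-absorption is preserved by sequential inductive limits, one may reduce to the case where $X$ is a finite simplicial complex.

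On such an $X$ I would induct on the cellular structure, at each stage extending an embedding already defined over a closed subcomplex $Y$ to include a further top-dimensional cell $\Delta$. By the uniqueness-up-to-approximate-unitary-equivalence of unital embeddings of a strongly self-absorbing $\D$, the restriction of the existing embedding to $Y \cap \Delta \subset \partial \Delta$ is approximately unitarily equivalent to any embedding built fibrewise over $\Delta$. The hypothesis that $\D$ is $K_1$-injective upgrades this approximate unitary equivalence to an honest homotopy through unitaries, which, inserted into a collar neighbourhood of $Y \cap \Delta$ inside $\Delta$, allows the two embeddings to be glued into a single embedding over $Y \cup \Delta$. Iterating the step exhausts $X$.

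The main obstacle is making this patching precise at the level of the central sequence algebra $\MAcent$. Each $\phi_x$ is only represented by a sequence of asymptotic morphisms into $\M(\A_x)$, so one must instead argue with $\eps$-approximations: for every finite $F \subset \A$, every finite $G \subset \D$ and every $\eps > 0$ one produces an honest unital $*$-homomorphism $\psi : \D \to \M(\A)$ with $\|[\psi(d), a]\| < \eps$ for all $d \in G$ and $a \in F$, and only at the end extracts an embedding into $\MAcent$ via a diagonal sequence. The finite covering dimension of $X$ bounds the number of gluing steps and hence both the accumulated tolerances and the number of unitary homotopies required; this is precisely why the hypothesis on $\dim X$ cannot be dropped.
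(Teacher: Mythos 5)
The paper does not prove this statement at all: it is imported verbatim as \cite[Theorem 4.6]{hirshberg-rordam-winter}, so there is no internal proof to compare against. Measured against the proof in that reference, your sketch assembles the right tools in outline --- approximate unitary equivalence of unital morphisms out of $\D$, $K_1$-injectivity to replace a single unitary by a path, gluing along a Urysohn-function ``collar'', and finite-set/$\eps$ bookkeeping before passing to $\MAcent$ --- but two of its load-bearing steps are genuinely broken. First, the reduction to finite simplicial complexes is circular: writing $X=\varprojlim X_k$ does not present $\A$ as an inductive limit of $C(X_k)$-algebras; the only way to view $\A$ as a $C(X_k)$-algebra is by restricting scalars along $C(X_k)\hookrightarrow C(X)$, and then its fibers are the restrictions $\A|_{\pi_k^{-1}(x)}$ over closed subsets of $X$, whose $\D$-absorption is an instance of the very theorem being proved. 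Second, and more fundamentally, your induction step presupposes ``an embedding built fibrewise over $\Delta$'', i.e.\ an approximately central unital map from $\D$ into $\A|_\Delta$ over an entire top-dimensional cell. Since no local triviality is assumed, this is again the theorem for the restriction to $\Delta$; nowhere does the sketch explain how to pass from the genuinely pointwise hypothesis (a morphism into $\M(\A_x)_\infty\cap \A_x'$ for each single $x$) to data defined over a neighborhood. That passage is the first real idea of the actual proof: since $\A_x=\varinjlim_Y \A|_Y$ over closed neighborhoods $Y$ of $x$, an approximately central, approximately multiplicative solution for a prescribed finite set and tolerance can be lifted to $\A|_Y$ for $Y$ small enough.

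Two smaller points. The role of $\dim X$ is not to bound ``the number of gluing steps'' through a count of cells: the argument in \cite{hirshberg-rordam-winter} takes a finite cover by small closed sets of order at most $\dim X+1$, partitions it into $\dim X+1$ families of pairwise disjoint sets (over a disjoint union the gluing is free), and then performs only $\dim X+1$ genuine gluings; it is the multiplicity of the cover, not the number of its members, that the dimension controls, and this is also where the approximate unitary equivalence must be formulated for maps with approximately commuting ranges (via the half-flip for $\D$) rather than for maps into $\A|_{Y\cap\Delta}$, which is not yet known to be $\D$-absorbing. Finally, your forward direction tacitly assumes the isomorphism $\A\cong\A\otimes\D$ is $C(X)$-linear, which it need not be; the implication is nevertheless correct because $\D$-absorption passes to quotients of separable $C^*$-algebras, and each fiber $\A_x$ is such a quotient.
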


\section{Proof of the main theorem}

\begin{Lemma}
\label{matrix fixed point}
 For some $m,n \in \N$, consider the action $\alpha$ of $S_n$ on $M_m^{\otimes n}$
given by permutation of the factors. Suppose there are $p,k \in \N$ are such that
$p$ is prime, $p^{k} | m$ and $p^{k} \not | n!$. Then, there exists a
unital embedding of $M_p$ into $\left ( M_m^{\otimes n} \right
)^{S_n}$.
\end{Lemma}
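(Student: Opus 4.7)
The plan is to identify $M_m^{\otimes n}$ with $\mathrm{End}\big((\C^m)^{\otimes n}\big)$, so that the permutation action $\alpha$ becomes conjugation by the permutation unitary representation $u : S_n \to U\big((\C^m)^{\otimes n}\big)$. The fixed point subalgebra $\big(M_m^{\otimes n}\big)^{S_n}$ is then precisely the commutant of $u(S_n)$ in $\mathrm{End}\big((\C^m)^{\otimes n}\big)$, and hence decomposes according to the $S_n$-isotypic decomposition of $(\C^m)^{\otimes n}$.

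Invoking Schur--Weyl duality, one has the $S_n$-module decomposition
\[
(\C^m)^{\otimes n} \;\cong\; \bigoplus_{\lambda \vdash n,\; \ell(\lambda) \leq m} S^\lambda \otimes W^\lambda,
\]
where $S^\lambda$ is the Specht module (an irreducible $S_n$-representation) and $W^\lambda$ is the corresponding multiplicity space (a Weyl module for $GL(m)$). Taking commutants,
\[
\big(M_m^{\otimes n}\big)^{S_n} \;\cong\; \bigoplus_{\lambda} M_{\dim W^\lambda}.
\]
A unital $*$-homomorphism from $M_p$ into such a finite direct sum of matrix algebras exists if and only if $p$ divides every nonzero $\dim W^\lambda$, so the lemma reduces to showing that $p \mid \dim W^\lambda$ for every $\lambda \vdash n$ with $\ell(\lambda) \leq m$.

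For this step I would apply the hook content formula
\[
\dim W^\lambda \;=\; \prod_{(i,j) \in \lambda} \frac{m + j - i}{h(i,j)},
\]
where $h(i,j)$ is the hook length at the cell $(i,j)$. The classical hook length formula $\dim S^\lambda = n!/\prod_{(i,j)} h(i,j)$ forces $\prod_{(i,j)} h(i,j)$ to divide $n!$. Writing $v_p$ for the $p$-adic valuation, the hypothesis $p^k \nmid n!$ then yields $v_p\big(\prod_{(i,j)} h(i,j)\big) \leq k-1$. On the other hand, the cell $(1,1)$ is present in every nonempty $\lambda$ and contributes the factor $m+1-1 = m$ to the numerator, which has $v_p$-valuation at least $k$ by hypothesis. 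Combining these two bounds gives $v_p(\dim W^\lambda) \geq 1$, which is the desired divisibility.

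The main obstacle here is conceptual rather than technical: one has to recognize that the problem reduces to the representation theory of $S_n$ and that the relevant quantities are controlled by classical combinatorics of Young diagrams. Once Schur--Weyl duality and the hook content formula are brought to bear, the divisibility is secured by the small but decisive observation that the $(1,1)$ cell always contributes exactly the factor $m$ to the numerator, while the denominator is uniformly controlled by the elementary bound $\prod_{(i,j)} h(i,j) \mid n!$.
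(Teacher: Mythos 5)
Your proof is correct, but it takes a genuinely different route to the key divisibility. Both you and the paper begin the same way: identify $M_m^{\otimes n}$ with $\B((\C^m)^{\otimes n})$, recognize the fixed point algebra as the commutant of the permutation unitaries, hence as a direct sum of matrix algebras indexed by the irreducible $S_n$-representations occurring in $(\C^m)^{\otimes n}$, and reduce to showing $p$ divides each multiplicity. Where you diverge is in how that divisibility is obtained. The paper never identifies the multiplicities explicitly: it computes the character of the permutation representation directly ($\chi(g)=m^{\ell}$, where $\ell$ is the number of cycles of $g$, by counting fixed $n$-tuples), invokes the integrality of $S_n$-characters, and observes that the unnormalized inner product $\lb \chi_\rho,\chi\rb = n!\,\mu(\rho)$ is divisible by $m$, hence by $p^k$, while $v_p(n!)\leq k-1$. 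You instead pin down the multiplicities as $\dim W^\lambda$ via Schur--Weyl duality and run the same $p$-adic bookkeeping through the hook content and hook length formulas: $v_p\bigl(\prod_{(i,j)}(m+j-i)\bigr)\geq v_p(m)\geq k$ from the $(1,1)$ cell, and $v_p\bigl(\prod_{(i,j)}h(i,j)\bigr)\leq v_p(n!)\leq k-1$ since the hook product divides $n!$. Both arguments ultimately rest on the identical arithmetic ($k-(k-1)\geq 1$), but the paper's character computation is lighter on classical input (no need to know which irreducibles occur or what their multiplicities are), whereas yours is more explicit and combinatorial, actually exhibiting the multiplicity spaces. One small point worth making explicit in your write-up: the factors $m+j-i$ are positive integers precisely because $\ell(\lambda)\leq m$, so the valuation estimate on the numerator is legitimate; you implicitly use this but do not say it.
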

\begin{proof}
Denote $V = \C^m$. Note that $M_m^{\otimes n} \cong \B(V^{\otimes
n})$. We have a unitary representation $U$ of $S_n$ on $V^{\otimes
n}$ given by
$$U_g (\xi_1 \otimes \xi_2 \otimes \cdots \otimes
\xi_n) = \xi_{g(1)}\otimes \xi_{g(2)}\otimes\cdots \otimes
\xi_{g(n)}
$$
Note that $\alpha_g(a) = U_g a U_g^*$. Thus, the fixed point
subalgebra is
$$\{U_g \mid g \in S_n\}'\cong \bigoplus_{\rho \in
\hat{S_n}}M_{\mu(\rho)}
$$
where $\hat{S_n}$ is the set of (equivalence classes of) irreducible representations of
$S_n$, and $\mu(\rho)$ is the multiplicity of the representation
$\rho$ as a subrepresentation of $U$. It suffices, therefore, to
show that $p | \mu(\rho)$ for all $\rho \in \hat{S_n}$. To this
effect, we shall compute the character $\chi$ of the
representation $U$.

Suppose $g \in S_n$ is represented as $\ell$ disjoint cycles
(including cycles of length 1). Let $\xi_1,...,\xi_m$ be an
orthonormal basis for $V$. So, $\{\xi_{i_1} \otimes \cdots \otimes
\xi_{i_n} \mid i_1,...,i_n \in \{1,...,m\}\}$ form an orthonormal
basis for $V^{\otimes n}$. So, we have
$$\chi(g) = \sum_{i_1,...,i_n \in
\{1,...,m\}} \lb \xi_{i_1} \otimes \cdots \otimes \xi_{i_n},
\xi_{g(i_1)} \otimes \cdots \otimes \xi_{g(i_n)}\rb
$$
$$= \sum_{i_1,...,i_n \in \{1,...,m\}} \delta_{(i_1,...,i_n) ,
(g(i_1),...,g(i_n))},$$
where the last expression is simply the number of $n$-tuples of
 elements from $\{1,...,m\}$ which are fixed under the permutation
 $g$. An $n$-tuple is fixed if and only if it is constant on each
 of the cycles of $g$, and there are $\ell$ of those. Thus
  $$\chi(g) = m^{\ell}.$$
Recall that the characters of the irreducible representations of
$S_n$ are all integer-valued (this follows immediately from the
Frobenius character formula; see, for instance, chapter 4 of
\cite{fulton-harris}). Thus, suppose $\rho$ is an irreducible
representation, and its character is $\chi_{\rho}$, then
$$\mu(\rho) = \frac{1}{n!} \lb \chi_{\rho},\chi \rb$$
Since all the entries of $\chi$ are integers divisible by $m$, and
hence by $p^k$, and $\chi_{\rho}$ consists of integer entries, we
see that $\lb \chi_{\rho},\chi \rb$ is an integer and $p^k|\lb
\chi_{\rho},\chi \rb$, and since $p^k \not | n!$, we have that $p
| \mu(\rho)$, as required.
\end{proof}

The following is now immediate.

\begin{Cor}
\label{UHF absorption}  If $\D$ is a strongly self absorbing UHF
algebra, and $p$ is a prime which appears in the supernatural
number associated to $\D$, then there is a unital embedding of
$M_{p^{\infty}}$ into the fixed point subalgebra of $\D^{\otimes n}$ under
the action of $S_{n}$.
\end{Cor}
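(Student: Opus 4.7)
The plan is to bootstrap Lemma \ref{matrix fixed point} from a single copy of $M_p$ up to $M_{p^\infty}$, by exploiting the fact that any prime $p$ appearing in the supernatural number of a strongly self-absorbing UHF algebra $\D$ does so with infinite multiplicity. In particular, $\D \cong \D \otimes M_{p^\infty}$, which means $\D$ absorbs any finite or countably infinite tensor power of $M_{p^k}$ for any $k$.

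First, I would fix $k \in \N$ large enough that $p^k \nmid n!$; since $n$ is fixed and only finitely many factors of $p$ appear in $n!$, such a $k$ certainly exists. The point of this choice is to place $m := p^k$ within the hypotheses of Lemma \ref{matrix fixed point}, which then produces a unital embedding $M_p \hookrightarrow \big((M_{p^k})^{\otimes n}\big)^{S_n}$. Next, using $\D \cong \D \otimes \bigotimes_{j=1}^\infty M_{p^k}$ together with commutativity of tensor products, I would write
$$
\D^{\otimes n} \;\cong\; \D^{\otimes n} \otimes \bigotimes_{j=1}^\infty (M_{p^k})^{\otimes n}.
$$
Under this identification, the permutation action of $S_n$ on the left corresponds on the right to the simultaneous permutation of the $n$ tensor factors of $\D$ and of the $n$ tensor factors within each block $(M_{p^k})^{\otimes n}$, so the fixed point subalgebra $(\D^{\otimes n})^{S_n}$ contains the infinite tensor product $\bigotimes_{j=1}^\infty \big((M_{p^k})^{\otimes n}\big)^{S_n}$.

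Finally, tensoring the unital embeddings $M_p \hookrightarrow \big((M_{p^k})^{\otimes n}\big)^{S_n}$ across $j \in \N$ assembles a unital embedding
$$
M_{p^\infty} \;\cong\; \bigotimes_{j=1}^\infty M_p \;\hookrightarrow\; (\D^{\otimes n})^{S_n},
$$
as required. There is no genuine obstacle beyond the content already supplied by Lemma \ref{matrix fixed point}: self-absorption of $\D$ by $(M_{p^k})^{\otimes \infty}$ furnishes an infinite supply of independent equivariant blocks, each contributing one copy of $M_p$, which is exactly why the corollary is marked as immediate.
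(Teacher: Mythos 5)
Your argument is correct and is essentially the paper's own proof: both obtain a single unital copy of $M_p$ in the fixed point algebra via Lemma \ref{matrix fixed point} and then use the identification $\D \cong \D^{\otimes \infty}$ (equivalently, absorption of infinitely many commuting equivariant blocks $(M_{p^k})^{\otimes n}$) to stack countably many commuting copies of $M_p$, yielding $M_{p^{\infty}}$. Your write-up just makes explicit the choice of $k$ with $p^k \nmid n!$ and the $S_n$-equivariance of the tensor decomposition, which the paper leaves implicit.
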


\begin{proof}
The preceding lemma clearly yields an embedding of $M_{p}$ into the fixed point algebra
of $\D^{\otimes n}$, hence an embedding of $M_{p^{\infty}}$ into the fixed point algebra
of $(\D^{\otimes \infty})^{n}$. Identifying $\D$ with $\D^{\otimes \infty}$,
we obtain the result.
\end{proof}

In view of Remark \ref{r-equivariant-embedding} and
Lemma \ref{invariant embedding implies absorption},
we thus proved Theorem \ref{main theorem} for the case of UHF
algebras of infinite type.

For $\D$ the tensor product of $\OO_{\infty}$ with a UHF algebra $\B$ of infinite type,
we may apply Theorem \ref{main theorem} for $\OO_{\infty}$ and $\B$ separately,
using $\psi \circ (\id_{\OO_{\infty}} \otimes \be_{\B})^{\otimes n}$ and
$\psi \circ (\be_{\OO_{\infty}} \otimes \id_{\B})^{\otimes n}$, respectively,
in place of $\psi$ to show that $\A \times_{\alpha} G$ absorbs both $\OO_{\infty}$ and $\B$.
But then it is straightforward to conclude from Theorem \ref{D-stable characterization} that
$\A \times_{\alpha} G$ also absorbs $\OO_{\infty} \otimes \B$.

We now turn to the case of approximate divisibility. We first need
a simple technical lemma. The proof is done via a standard
diagonalization trick, which we leave to the reader. Any tensor
product can be taken in the lemma below.

\begin{Lemma}
\label{cantor-diag} Let $\A$, $\D$ be  $C^*$-algebras, and let
$\alpha:G \to \aut(\A)$ be an action of a finite group. Suppose
$\D$ is separable, and let $\mathcal{C}$ be a separable subspace
of $\MAcent$. Suppose there exists a unital homomorphism
$$\psi:\D^{\otimes n} \to \MAcent$$ such that
$$\bar{\alpha}_g(\psi(d_1 \otimes d_2 \otimes \cdots  \otimes d_n)) =
\psi(d_{g(1)}\otimes d_{g(2)} \otimes \cdots \otimes d_{g(n)})$$ for all
$g \in G$ and $d_1,...,d_n$ in $\D$. Then, there exists a unital
homomorphism $$\psi':\D^{\otimes n} \to \MAcent\cap \mathcal{C}'$$
which satisfies the same property with respect to the action
$\bar{\alpha}$.
\end{Lemma}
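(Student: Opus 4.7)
The plan is to prove Lemma \ref{cantor-diag} by a standard sequential diagonalization. First, I would fix countable dense subsets $\{d_{j}\}_{j \in \N}$ of the unit ball of $\D^{\otimes n}$ (taken closed under the $G$-action and dense under the algebraic operations and adjoint), $\{c_{k}\}_{k \in \N}$ of the unit ball of $\mathcal{C}$, and $\{a_{i}\}_{i \in \N}$ of the unit ball of $\A$. I then lift each $\psi(d_{j})$ to a bounded representing sequence $(\lambda_{j}^{(m)})_{m}$ in $\ell^{\infty}(\N, \M(\A))$, and each $c_{k}$ to $(\gamma_{k}^{(m)})_{m}$. Since $\psi$ is a unital $G$-equivariant $*$-homomorphism into $\MAcent$ and the $c_{k}$ lie in $\MAcent$, these lifts satisfy the expected asymptotic relations as $m \to \infty$: the $*$-algebra relations for the $\lambda_{j}^{(m)}$, the equivariance $\|\bar{\alpha}_{g}(\lambda_{j}^{(m)}) - \lambda_{g \cdot j}^{(m)}\| \to 0$, and the centrality relations $\|[\lambda_{j}^{(m)}, a_{i}]\| \to 0$ and $\|[\gamma_{k}^{(m)}, a_{i}]\| \to 0$, each for every fixed index.

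The core step is to produce a strictly increasing $f: \N \to \N$, chosen inductively, so that the reindexed elements $\tilde{\lambda}_{j}^{(m)} := \lambda_{j}^{(f(m))}$ simultaneously satisfy, for every $j, k, i \leq m$ and up to error $1/m$: the relevant algebra relations, the equivariance $\|\bar{\alpha}_{g}(\tilde{\lambda}_{j}^{(m)}) - \tilde{\lambda}_{g \cdot j}^{(m)}\| < 1/m$, the centrality $\|[\tilde{\lambda}_{j}^{(m)}, a_{i}]\| < 1/m$, and the new ingredient $\|[\tilde{\lambda}_{j}^{(m)}, \gamma_{k}^{(m)}]\| < 1/m$. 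The first three families of conditions can be met by taking $f(m)$ sufficiently large, since these relations hold in the limit along the full lift. Granted all four, the assignment $d_{j} \mapsto [(\tilde{\lambda}_{j}^{(m)})_{m}]$ extends uniquely by continuity and density to the desired unital $G$-equivariant $*$-homomorphism $\psi' : \D^{\otimes n} \to \MAcent$, and its image lies in $\mathcal{C}'$ by the fourth condition.

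The main obstacle is securing the new commutation condition at each stage $m$: one needs $\|[\lambda_{j}^{(\ell)}, \gamma_{k}^{(m)}]\| \to 0$ as $\ell \to \infty$, where $\gamma_{k}^{(m)}$ is a specific, fixed element of $\M(\A)$. This is immediate when the lifts $\gamma_{k}^{(m)}$ can be chosen inside $\A$ (for instance whenever $\mathcal{C} \subset \A_{\infty}$), because then the centrality $\psi(d_{j}) \in \A'$ forces approximate commutation of $\lambda_{j}^{(\ell)}$ with the fixed element $\gamma_{k}^{(m)} \in \A$ as $\ell \to \infty$. In the general multiplier setting, one chooses the lifts so that each $\gamma_{k}^{(m)}$ is well approximated in norm by cut-downs $e_{N} \gamma_{k}^{(m)} e_{N} \in \A$ via an approximate identity $(e_{N})$, and reduces to the previous case; this is the only technical point, and once it is handled the diagonalization closes routinely.
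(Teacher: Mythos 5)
The paper offers no written proof here (it is ``left to the reader''), so the only question is whether your diagonalization actually closes. Its overall architecture is exactly what the authors intend, and you have correctly isolated the one non-routine point: to reindex, you need $\|[\lambda_j^{(\ell)},\gamma_k^{(m)}]\|$ to become small as $\ell\to\infty$ for each fixed $m$, where $\gamma_k^{(m)}$ is a single fixed element of $\M(\A)$. But your resolution of that point fails. For non-unital $\A$, the cut-downs $e_N\gamma e_N$ converge to $\gamma$ only \emph{strictly}, not in norm; indeed $e_N\gamma e_N\in\A$ and $\A$ is norm-closed in $\M(\A)$, so $\gamma$ is norm-approximated by such cut-downs if and only if $\gamma\in\A$ already. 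Since elements of $\MAcent$ generally cannot be lifted to sequences with entries in $\A$ (the unit of $\M(\A)_\infty$ already cannot, as $\|1-a\|\geq 1$ for all $a\in\A$), your ``reduction to the previous case'' is vacuous exactly in the situation it is meant to handle. The difficulty is not cosmetic: a representing sequence of an element of $\MAcent$ need not asymptotically commute with a fixed multiplier (e.g.\ in $\M(\K(\ell^2(\Z)))$ the projections $q_\ell$ onto $\overline{\mathrm{span}}\{e_k: k\geq\ell\}$ satisfy $\|[q_\ell,a]\|\to 0$ for all compact $a$ but $\|[q_\ell,S]\|\geq 1$ for the bilateral shift $S$), so the needed estimate simply is not available from the hypothesis $\psi(d_j)\in\A'$.

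What your argument does yield, and what should be stated, is commutation modulo the two-sided annihilator of $\A$. Writing $B=C^*(\A\cup\{\gamma_k^{(m)}\})\subseteq\M(\A)$, one has $\gamma_k^{(m)}a,\,a\gamma_k^{(m)}\in\A$ for every $a\in\A$, whence for fixed $m,k$ and $a$ one gets $\|[\lambda_j^{(\ell)},\gamma_k^{(m)}]\,a\|\to 0$ and $\|a\,[\lambda_j^{(\ell)},\gamma_k^{(m)}]\|\to 0$ as $\ell\to\infty$; adding a countable dense subset of $\A$ (here separability of $\A$ is genuinely used) to your list of conditions, the diagonalization produces $\psi'$ with $[\psi'(\D^{\otimes n}),\mathcal{C}]\subseteq \mathrm{Ann}(\A)=\{z\in\M(\A)_\infty : z\A=\A z=0\}$ rather than $=0$. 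This weaker conclusion suffices for the application in Corollary~\ref{approximate divisibility permanence}: $\mathrm{Ann}(\A)$ is a $\bar\alpha$-invariant ideal of $\MAcent$, so one obtains a $G$-equivariant unital homomorphism of $(\D^{\otimes m})^{\otimes n}$ into the quotient $\MAcent/\mathrm{Ann}(\A)$, and the characterization of Theorem~\ref{D-stable characterization} (and of approximate divisibility) is insensitive to this quotient. Alternatively, in the unital case $\M(\A)=\A$ and your argument is complete as written. Either state the lemma with the annihilator correction, or add the argument that repairs it; as it stands the last paragraph of your proof is a gap.
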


\begin{Cor}
\label{approximate divisibility permanence}
Under the hypotheses
of Theorem \ref{main theorem}, for $\D$ a finite direct sum of
matrix algebras other than $\C$, we have that $\A \times_{\alpha}
G$ is approximately divisible.
\end{Cor}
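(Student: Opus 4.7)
By Remark~\ref{r-equivariant-embedding} we may assume $G=S_n$ acting by permutation, so that $\psi\colon\D^{\otimes n}\to\MAcent$ is unital and $S_n$-equivariant. By Lemma~\ref{invariant embedding implies absorption}, it suffices to construct a unital $*$-homomorphism from $M_p\oplus M_q$, for some $p,q\geq 2$, into the $G$-fixed subalgebra $(\MAcent)^G$; this will yield approximate divisibility of $\A\times_\alpha G$.

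The plan is to imitate the argument of Corollary~\ref{UHF absorption}: combine a diagonal embedding with Cantor-style diagonalization. The diagonal map $\Delta\colon\D\to\D^{\otimes n}$, $d\mapsto d\otimes\cdots\otimes d$, is unital and has image in $(\D^{\otimes n})^{S_n}$, so $\psi\circ\Delta\colon\D\to(\MAcent)^G$ is a unital $*$-homomorphism. Iterating Lemma~\ref{cantor-diag}, we obtain unital, $S_n$-equivariant homomorphisms $\psi_1=\psi,\psi_2,\ldots\colon\D^{\otimes n}\to\MAcent$ whose images pairwise commute (at each stage choosing $\psi_{j+1}$ into the relative commutant of the separable $C^*$-subalgebra generated by the earlier images). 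Multiplying the commuting unital $*$-homomorphisms $\psi_j\circ\Delta\colon\D\to(\MAcent)^G$ then gives, for each $N\in\N$, a unital $*$-homomorphism $\Phi_N\colon\D^{\otimes N}\to(\MAcent)^G$.

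Finally, we need a unital embedding $M_p\oplus M_q\hookrightarrow\D^{\otimes N}$ for some $N$ and some $p,q\geq 2$; composing with $\Phi_N$ and applying Lemma~\ref{invariant embedding implies absorption} will then conclude the proof. This last step is a purely finite-dimensional, combinatorial matter, handled already with $N=2$. Writing $\D=M_{m_1}\oplus\cdots\oplus M_{m_k}$ with each $m_i\geq 2$, consider first the single-summand case $\D=M_m$: take $p=q=m$ and use the unital embedding $M_m\oplus M_m\hookrightarrow M_{m^2}=\D^{\otimes 2}$ sending $(x,y)\mapsto\mathrm{diag}(x,y,\ldots,y)$ with $m-1$ copies of $y$. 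In the multi-summand case, one may take $p=2,q=3$ and solve $2a+3b=m_im_j$ on each simple summand of $\D^{\otimes 2}$, choosing the multiplicities $(a,b)\geq 0$ so that the $p$- and $q$-components both map nontrivially somewhere; this is always possible because every integer $\geq 2$ is a nonnegative combination of $2$ and $3$, and there are sufficiently many simple summands in $\D^{\otimes 2}$ to accommodate both choices. This final combinatorial verification is where the main (but modest) technical work lies; the rest of the argument is formal, and its only non-trivial input is the iteration scheme of Lemma~\ref{cantor-diag}.
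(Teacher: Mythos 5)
The central step of your argument is broken: the ``diagonal map'' $\Delta\colon\D\to\D^{\otimes n}$, $d\mapsto d\otimes\cdots\otimes d$, is not a $*$-homomorphism --- it is not even linear, since $(d_1+d_2)^{\otimes n}\neq d_1^{\otimes n}+d_2^{\otimes n}$. Consequently $\psi\circ\Delta$ is not a unital $*$-homomorphism of $\D$ into $(\MAcent)^G$, the maps $\Phi_N$ do not exist, and the proof collapses at its first substantive step. This is not a repairable slip: producing a unital $*$-homomorphism from a suitable algebra into the $S_n$-fixed point subalgebra $(\D^{\otimes n})^{S_n}$ is precisely the difficulty the whole paper is built around (if the diagonal map were a homomorphism, Theorem \ref{main theorem} would be trivial for every $\D$), and your proposal substitutes a false one-line claim for it.

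The surrounding scaffolding is essentially the paper's: reducing to $G=S_n$ via Remark \ref{r-equivariant-embedding}, iterating Lemma \ref{cantor-diag} to get commuting equivariant copies of $\psi$ and hence an equivariant map on $(\D^{\otimes m})^{\otimes n}$, and embedding $M_p\oplus M_q$ unitally into $\D^{\otimes m}$ for large $m$ so that one may assume $\D\cong M_p\oplus M_q$. What is missing is the actual content: the paper chooses the primes $p,q$ \emph{greater than} $n$ (your choices $p=q=m$ or $p=2$, $q=3$ ignore this constraint, which is essential), and then constructs by hand a unital embedding of $M_p\oplus M_q$ into $\bigl((M_p\oplus M_q)^{\otimes n}\bigr)^{S_n}$. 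This is done by identifying the fixed point algebra fiberwise over the orbits of $\{0,1\}^n$, introducing the algebra $\B=\bigoplus_{k=0}^n\bigl(M_p^{\otimes n-k}\bigr)^{S_{n-k}}\otimes\bigl(M_q^{\otimes k}\bigr)^{S_k}$ together with an explicit unital embedding of $\B$ into the fixed point algebra, and then mapping $M_p\oplus M_q$ unitally into each summand of $\B$ via Lemma \ref{matrix fixed point} --- which is exactly where the hypothesis $p,q>n$ (so that $p\nmid n!$ and $q\nmid n!$) is used. None of this representation-theoretic input appears in your proposal, and without it there is no way to land in the $G$-fixed part of $\MAcent$.
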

\begin{proof}
Choose two different prime numbers $p,q$ greater than $n$. By a
repeated application of Lemma \ref{cantor-diag}, we see that for
any $m$, we can find unital homomorphisms
$\psi_1,...,\psi_m:\D^{\otimes n} \to \MAcent$ with commuting
ranges such that
$$\bar{\alpha}_g(\psi_k(d_1 \otimes d_2 \otimes \cdots \otimes d_n)) =
\psi_k(d_{g(1)}\otimes d_{g(2)} \otimes \cdots \otimes d_{g(n)})$$ for all
$k=1,...,m$, $g \in G$ and $d_1,...,d_n$ in $\D$. Define
$\psi:\left ( \D^{\otimes m} \right )^{\otimes n} \to \MAcent$ by
$\psi = \psi_1 \otimes \psi_2 \otimes \cdots \otimes \psi_m$, then
we have
$$\bar{\alpha}_g(\psi(d_1 \otimes d_2 \otimes \cdots d_n)) =
\psi(d_{g(1)}\otimes d_{g(2)} \otimes \cdots d_{g(n)})$$ for all
$g \in G$ and $d_1,...,d_n$ in $\D^{\otimes m}$. Of course, if
$\D_0$ is any unital subalgebra of $\D^{\otimes m}$, we could
restrict $\psi$ to $\D_0$ to get an embedding with similar
properties, using $\D_0$ instead of $\D$.

We recall that there is a natural number $N$ such that for any
natural number $k \geq N$ there are positive integers $a,b$ such
that $ap+bq = k$, and therefore there is a unital embedding of
$M_p \oplus M_q$ into $M_k$. Now, if $\ell>1$ is the size of the
smallest matrix algebra summand in $\D$, the size of the smallest
matrix algebra summand in $\D^{\otimes m}$ is $\ell^m$, and thus,
for sufficiently large $m$, there exists a unital embedding of
$M_p\oplus M_q$ into $\D^{\otimes m}$.  We may therefore assume
that we had $\D \cong M_p \oplus M_q$ to begin with.

We now show that there is a unital embedding of $M_p \oplus M_q$
into $\left ( (M_p \oplus M_q)^{\otimes n} \right )^{S_n}$. Note
that we can obviously identify
$$
(M_p \oplus M_q)^{\otimes n} \cong \bigoplus_{v \in \{0,1\}^n}
\bigotimes_{i=1}^{n}M_{p^{v(i)}} \otimes M_{q^{1-v(i)}}
$$
(where we write elements of $\{0,1\}^n$ as functions from
$\{1,2,...,n\}$ to $\{0,1\}$).
Consider the algebra
$$
\B = \bigoplus_{k=0}^n \left ( M_{p}^{\otimes n-k} \right
)^{S_{n-k}} \otimes \left ( M_{q}^{\otimes k} \right )^{S_k}
$$
where the superscript denotes that we are considering the fixed
point subalgebra under the corresponding action of the symmetric
subgroup of $S_n$. For $b \in \B$, we denote by $b(k)$ the
component of $b$ in the summand
$$\left ( M_{p}^{\otimes n-k} \right
)^{S_{n-k}} \otimes \left ( M_{q}^{\otimes k} \right )^{S_k}
\subseteq  M_{p}^{\otimes n-k}  \otimes M_{q}^{\otimes k}.$$

We have a unital embedding of $\B$ into $\bigoplus_{v \in
\{0,1\}^n} \bigotimes_{i=1}^{n}M_{p^{1-v(i)}} \otimes M_{q^{v(i)}}$ defined as follows. For
$$
a \in \bigoplus_{v \in
\{0,1\}^n} \bigotimes_{i=1}^{n}M_{p^{1-v(i)}} \otimes M_{q^{v(i)}}
$$
we denote by $a(v)$ the $v$'th component of $a\in
(M_p \oplus M_q)^{\otimes n}$, $v \in \{0,1\}^n$. Write $k_v
 = \sum_{i=1}^nv(i)$, and let  $w_v \in \{0,1\}^n$ be given by
$$w_v(i) = \left \{ \begin{matrix} 0 \mid i \leq n-k \\ 1 \mid i>n-k
\end{matrix} \right . $$

Let $g \in S_n$ be a permutation such that $g(w_v) = v$, where we
consider the naturally induced permutation action of $S_n$ on
$\{0,1\}^{\otimes n}$. We define
$$
\beta_g:  M_{p}^{\otimes n-k_v}  \otimes M_{q}^{\otimes k_v}
\to
\bigotimes_{i=1}^{n}M_{p^{1-v(i)}} \otimes M_{q^{v(i)}}
$$
by
$$
\beta_g(a_1 \otimes a_2 \otimes \cdots \otimes a_n) =
a_{g(1)} \otimes \cdots \otimes a_{g(n)}
$$
We note that if
$x \in \left ( M_{p}^{\otimes n-k} \right
)^{S_{n-k}} \otimes \left ( M_{q}^{\otimes k} \right )^{S_k}
\subseteq  M_{p}^{\otimes n-k_v}  \otimes M_{q}^{\otimes k_v}$
and $g(w_v) = h(w_v) = v$ then $\beta_g(x) = \beta_h(x)$.

We can define
$$\psi:\B \to
\bigoplus_{v \in \{0,1\}^n}
\bigotimes_{i=1}^{n}M_{p^{1-v(i)}}
\otimes M_{q^{v(i)}}$$ by
$$
\psi(b)(v) = \beta_{g_v}(b(k_v))
$$
where $g_v$ is an element of $S_n$ which satisfies that
$g_v(w_v) = v$. In particular, if $v = w_v$, then we simply have
$\psi(b)(v) = b(k_v)$.

For any $g \in S_n$, we have
$$\alpha_g(\psi(b))(v) =
\beta_{h_1}\beta_{h_2}^{-1}(\psi(b)(g^{-1}(v)))$$
where $h_1,h_2$ are such that $h_2(g^{-1}(v)) = w_{g^{-1}(v)} = w_v$,
and $h_1(w_v) = v$. Note further that
$\psi(b)(g^{-1}(v)) = \beta_{h_2}b(k_{g^{-1}(v)})$ and that
$k_{g^{-1}(v)} = k_v$, and thus
$$\alpha_g(\psi(b))(v) = \beta_{h_1}\beta_{h_2}^{-1}(\beta_{h_2}b(k_v)) =
\beta_{h_1}(b(k_v)) = \psi(b)(v),$$
so indeed the image of $\psi$
is fixed under $\alpha_g$   for all $g \in G$.

It thus suffices to construct a unital embedding
$\varphi:M_p \oplus M_q \to \B$.
For this, it will suffice, for each $k=0,1,...,n$, to construct a
unital homomorphism
$$\varphi_k : M_p \oplus M_q \to \left (
M_{p}^{\otimes n-k} \right )^{S_{n-k}} \otimes \left (
M_{q}^{\otimes k} \right )^{S_k}.$$ By Lemma \ref{matrix
fixed point}, for $k<n$, we have a unital embedding of $M_p$ into
$\left (
M_{p}^{\otimes n-k} \right )^{S_{n-k}} \otimes \C 1 \subseteq
\left (
M_{p}^{\otimes n-k} \right )^{S_{n-k}} \otimes \left (
M_{q}^{\otimes k} \right )^{S_k}$, and thus we may select $\varphi_k$ to be
such a non-injective unital homomorphism, which annihilates the summand $M_q$.
For $n=k$, we have that   $\left (
M_{p}^{\otimes n-k} \right )^{S_{n-k}} \otimes \left (
M_{q}^{\otimes k} \right )^{S_k} \cong \left (
M_{q}^{\otimes n} \right )^{S_n}$, and again by Lemma  \ref{matrix
fixed point} we can choose a unital embedding of $M_q$ into this summand,
and by annihilating the summand $M_p$, we have a unital homomorphism from
$M_p \oplus M_q$. Put together, we get a unital homomorphism
$\varphi = \bigoplus_{k=0}^n \varphi_k$, which has a trivial kernel.

\end{proof}

We now turn to the proof for $\D = \Zh$.

\begin{proof}[Proof of Theorem \ref{main theorem} for the case $\D
= \Zh$]

We denote
$$
\E = \{f \in C([0,1],M_{2^{\infty}} \otimes M_{3^{\infty}}) \mid
f(0) \in M_{2^{\infty}} \otimes 1 \;,\; f(1) \in 1 \otimes
M_{3^{\infty}} \}
$$
By Proposition 2.2 from \cite{rordam stable rank}, we know that
one can embed $\E$ unitally into $\Zh$. Thus, it suffices for us
to construct a unital homomorphism from $\Zh$ to $\left (
\E^{\otimes n} \right )^{S_n}$.

$\E$ can naturally be regarded as a $C([0,1])$-algebra (the center
of $\E$ can be identified in the obvious way with $C([0,1])$),
where the fiber $\E_0$  is $M_{2^{\infty}}$, the fiber $\E_1$ is
$M_{3^{\infty}}$, and the fibers $\E_t$ for $0<t<1$ are
$M_{2^{\infty}} \otimes M_{3^{\infty}}$. We may thus regard
$\E^{\otimes n}$ as a $C([0,1]^n)$-algebra, where the fiber over
$\vec{t} = (t_1,t_2,...,t_n)$ is isomorphic to $\E_{t_1} \otimes
\E_{t_2} \otimes \cdots \otimes \E_{t_n}$ (see Proposition 1.6 of
\cite{hirshberg-rordam-winter} for a discussion of these matters).
We shall denote $\E_{\vec{t}} = \E_{t_1} \otimes \E_{t_2} \otimes
\cdots \otimes \E_{t_n}$.

The unital inclusion $$\En \supseteq \EnS \supseteq \left (
C([0,1])^{\otimes n} \right )^{S_n} \cong C([0,1]^n/S_n) \cong
C(\Delta)$$ gives $\EnS$ and $\En$ the structure of
$C(\Delta)$-algebras, where
$$\Delta = \{(t_1,t_2,...,t_n) \in [0,1]^n \mid
t_1\leq t_2 \leq \cdots \leq t_n\}.$$

Let $\vec{t} \in \Delta$ be a point, and let $H$ be the isotropy
group of $\vec{t}$. Note that a function $f \in
C_0(\Delta\smallsetminus\{\vec{t}\})$, thought of as a function on
$[0,1]^n$, is a function which vanishes on the $S_n$-orbit of
$\vec{t}$ (i.e. on $|G/H|$ points), and hence,
 thought of as a $C(\Delta)$-algebra, we
have $\En_{\vec{t}} \cong \bigoplus_{\vec{s} \in S_n(\vec{t})}
\E_{\vec{s}}$. As the $S_n$ action drops to this quotient, we have
that $\EnS_{\vec{t}} \subseteq \En_{\vec{t}} \cong
\bigoplus_{\vec{s} \in S_n(\vec{t})} \E_{\vec{s}}$ and consists of
the $S_n$-invariants elements there. Each $S_n$-invariant element
in $\bigoplus_{\vec{s} \in S_n(\vec{t})} \E_{\vec{s}}$ is
determined by its summand in $\E_{\vec{t}}$, and thus
$\EnS_{\vec{t}}$ is isomorphic to $\E_{\vec{t}}^H$.

Denoting $\{s_1, \cdots,s_{\ell}\} = \{t_1,\cdots,t_n\}$, where
$s_1 < s_2 < \cdots < s_{\ell}$ and each $s_j$ appears $k_j$ times
in the ordered $n$-tuple $(t_1,\cdots,t_n)$, we have that $H$ is a
direct product of the symmetric groups $S_{k_j}$, $j=1,...,\ell$,
and $$\E_{\vec{t}}^H \cong \bigotimes_{j=1}^{\ell} \left (
\E_{s_j}^{\otimes k_j} \right )^{S_{k_j}}.$$

It  follows from Corollary \ref{UHF absorption} that $\left (
\E_{s_j}^{\otimes k_j} \right )^{S_{k_j}}$ absorbs a UHF algebra
(of type $2^{\infty}$ or $3^{\infty}$), and therefore, so does
$\bigotimes_{j=1}^{\ell} \left ( \E_{s_j}^{\otimes k_j} \right
)^{S_{k_j}}$. By \cite[Theorem~5]{jiang-su}, any infinite
dimensional UHF algebra is $\Zh$-absorbing, and therefore,
$\E_{\vec{t}}^H$ is $\Zh$-absorbing as well.


We thus see that all the fibers of the $C(\Delta)$-algebra $\EnS$
are $\Zh$-absorbing. From Theorem \ref{HRW thm}, since $\Delta$
has finite covering dimension, we see that $\EnS$ must be
$\Zh$-absorbing, and in particular, admits a unital embedding of
$\Zh$. Therefore, there is a unital embedding of $\Zh$ into $\left
( \Zh^{\otimes n} \right )^{S_n}$, as required.
\end{proof}

\end{document}